\title[Estimates for rational homotopy groups]{Quantitative estimates for fractional Sobolev mappings in rational homotopy groups}
\author{Woongbae Park}
\address[Woongbae Park]{Department of Mathematics,
University of Pittsburgh,
301 Thackeray Hall,
Pittsburgh, PA 15260, USA}
\email{wop5@pitt.edu}
\author{Armin Schikorra}
\address[Armin Schikorra]{Department of Mathematics,
University of Pittsburgh,
301 Thackeray Hall,
Pittsburgh, PA 15260, USA}
\email{armin@pitt.edu}
\def\eps{\varepsilon}
\def\B{{\mathbb{B}}}
\def\N{{\mathbb N}}
\def\n{{\mathcal N}}
\def\S{{\mathbb S}}
\newtheorem{theorem}{Theorem}
\newtheorem{lemma}[theorem]{Lemma}
\newtheorem{proposition}[theorem]{Proposition}
\newtheorem{example}[theorem]{Example}
\newtheorem{question}[theorem]{Question}
\def\dist{{\rm dist\,}}
\def\lip{{\rm Lip\,}}
\def\rank{{\rm rank\,}}
\newcommand{\R}{\mathbb{R}}
\newcommand{\Z}{\mathbb{Z}}
\newcommand{\brac}[1]{\left (#1 \right )}
\newcommand{\abs}[1]{\left |#1 \right |}
\newcommand{\Ep}{\bigwedge\nolimits}
\newcommand{\barint}{
\rule[.036in]{.12in}{.009in}\kern-.16in \displaystyle\int }
\newcommand{\barcal}{\text{$ \rule[.036in]{.11in}{.007in}\kern-.128in\int $}}
\newcommand{\bbbc}{\mathbb C}
\def\mvint_#1{\mathchoice
          {\mathop{\vrule width 6pt height 3 pt depth -2.5pt
                  \kern -8pt \intop}\nolimits_{\kern -3pt #1}}%
          {\mathop{\vrule width 5pt height 3 pt depth -2.6pt
                  \kern -6pt \intop}\nolimits_{#1}}%
          {\mathop{\vrule width 5pt height 3 pt depth -2.6pt
                  \kern -6pt \intop}\nolimits_{#1}}%
          {\mathop{\vrule width 5pt height 3 pt depth -2.6pt
                  \kern -6pt \intop}\nolimits_{#1}}}
\numberwithin{theorem}{section} \numberwithin{equation}{section}
\newcommand{\lap}{\Delta }
\newcommand{\aleq}{\precsim}
\newcommand{\aeq}{\approx}
\newcommand{\Ds}[1]{|D|^{#1}}
\newcommand{\laps}[1]{(-\lap)^{\frac{#1}{2}}}
\newcommand{\lapms}[1]{I^{#1}}
\let\latexchi\chi
\renewcommand\chi{\@ifnextchar_\sub@chi\latexchi}
\newcommand{\sub@chi}[2]{
  \@ifnextchar^{\subsup@chi{#2}}{\latexchi^{}_{#2}}%
}
\newcommand{\subsup@chi}[3]{
  \latexchi_{#1}^{#3}%
}
\newcommand{\m}{\mathcal{M}}
\renewcommand{\d}{{\rm {deg}}}
\begin{document}

\begin{abstract}
Let $\mathcal{N} \subset \mathbb{R}^M$ be a smooth simply connected compact manifold without boundary. A rational homotopy subgroup of $\pi_{N}(\mathcal{N})$ is represented by a homomorphism \[{\rm deg}: \pi_{N}(\mathcal{N}) \to \mathbb{R}.\] For maps $f: \mathbb{S}^N \to \mathcal{N}$ we give a quantitative estimate of its rational homotopy group element ${\rm deg}([f]) \in \mathbb{R}$ in terms of its fractional Sobolev-norm. That is, we show that for all $\beta \in (\beta_0({\rm deg}),1]$,
\[
 |{\rm deg}([f])|\leq C({\rm deg})\, [f]_{W^{\beta,\frac{N}{\beta}}(\mathbb{S}^N)}^{\frac{N+L({\rm deg})}{\beta}}.
\]
Here $C({\rm deg}) > 0$, $L({\rm deg}) \in \mathbb{N}$, $\beta_0({\rm deg}) \in (0,1)$ are computable from the rational homotopy group represented by ${\rm deg}$. This extends earlier work by Van Schaftingen and the second author on the Hopf degree to the Novikov's integral representation for rational homotopy groups as developed by Sullivan, Novikov, Hardt and Rivi\`ere.
\end{abstract}

\maketitle
\tableofcontents
\sloppy

\section{Introduction}
Let $\n \subset \R^M$ be a smooth simply connected compact manifold without boundary and $N \in \N$. We denote $\pi_{N}(\n)$ the $N$-th homotopy group of $\n$. 

In \cite{Gromov99}, Gromov introduced the notion of quantitative homotopy theory, which very roughly could be described like this: given a continuous map $\varphi: \S^N \to \n$ can we find a formula which estimates the element in $\pi_{N}(\n)$ it represents by using only analytic estimates of the map $\varphi$?

But even for a simple manifold $\S^n$, its homotopy groups are nontrivial and very difficult to predict in higher degree.
In this generality this question is very difficult, so we are going to restrict our attention here to rational homotopy groups and estimates in fractional Sobolev spaces.

A rational homotopy (sub-)group of $\pi_{N}(\n)$ in the sense of Sullivan \cite{S77} is a homomorphism
\[
 {\d}: \pi_{N}(\n) \to \R.
\]
We are going to identify any such rational homotopy subgroup $\d$ with its induced map acting on the continuous maps $\S^N \to \n$,
\[
 \d: C^0(\S^N,\n) \to \R, \quad \d(\varphi) := \d\brac{[\varphi]},
\]
where $[\varphi] \in \pi_{N}(\n)$ is the class of maps homotopic to $\varphi$. We stress that in particular  ${\d}(\varphi) = 0$ for any map $\varphi \in C^0(\S^N,\n)$ which is constant or homotopic to a constant.

There are two fundamental examples of rational homotopy groups: the classical degree between spheres
\[
 {\d}_{\S^N}: \pi_{N}(\S^N) \to \Z,
\]
and the Hopf degree \cite{H31}, see also \cite[\textsection 18]{BT82},
\[
 {\d}_{H}: \pi_{4N-1}(\S^{2N}) \to \Z.
\]

For $\beta \in (0,1)$ and $p \in [1,\infty)$ the fractional Sobolev space $W^{\beta,p}(\S^N,\R^M)$ consists of all $\varphi \in L^p(\S^N,\R^M)$ such that
\[
 [\varphi]_{W^{\beta,p}(\S^N)} := \brac{\int_{\S^N} \int_{\S^N} \frac{|\varphi(x)-\varphi(y)|^p}{|x-y|^{N+\beta p}}\, dx\, dy}^{\frac{1}{p}} < \infty.
\]
While we will focus on estimates in fractional Sobolev spaces, notice that any $\alpha$-H\"older continuous map $\varphi \in C^\alpha(\S^N,\n)$ belongs to $W^{\beta,p}$ for any $\beta \in (0,\alpha)$ and $p \in [1,\infty)$, so our considerations include the H\"older-continuous case.

A natural question related quantitative homotopy theory is: Given a rational homotopy group $\d: \pi_{N}(\n) \to \R$, can we prove
\begin{equation}\label{eq:degreeestgoal}
 \d (\varphi) \leq [\varphi]_{W^{\beta,p}(\S^N)}^{q}
\end{equation}
for some $q \in [0,\infty)$? The answer is no if $p < \frac{N}{\beta}$ and $\deg$ is nontrivial. Indeed, for any $p < \frac{N}{\beta}$  by scaling arguments one can construct a map $\varphi$ with small norm $[\varphi]_{W^{\beta,p}(\S^N)}$ but with $\d(\varphi) \neq 0$. Consequently, we focus on the borderline case $p = \frac{N}{\beta}$. This is the case where any map $\varphi \in W^{\beta,\frac{N}{\beta}}(\S^N,\R^M)$ belongs to BMO (even VMO), cf \eqref{eq:smallbmo}, and \emph{qualitatively} BMO controls homotopy, as was obtained in the celebrated works \cite{SU82,BN95,BN96}, see also \Cref{la:smallnohomo}.

The question at hand is motivated by a question posed by Van Schaftingen in \cite{VS20}. He showed that the number of homotopy classes to which a map $\varphi: \S^N \to \n$ can belong can be estimated by its $W^{\beta,N/\beta}(\S^N)$-seminorm for any given $\beta > 0$, however without obtaining a power estimate for some finite $q$ as in \eqref{eq:degreeestgoal}, rather he obtained an double exponential-type estimate. As a particular example, he mentioned the question if it was possible to extend the seminal work by Bourgain-Brezis-Mironescu, \cite{BBM05}, to the Hopf degree. Bourgain-Brezis-Mironescu's work shows that in the case of $\pi_{N}(\S^N)$ an estimate of the form \eqref{eq:degreeestgoal} is true for \emph{any} $\beta \in (0,1]$  and $p = \frac{N}{\beta}$, and they obtained a sharp exponent $q$. It is unknown if the same is true for the Hopf degree -- however, in
\cite{SVS20} Van Schaftingen and the second author were able to obtain an estimate for the Hopf degree as in \eqref{eq:degreeestgoal} for any $\beta \geq \beta_0$ and $p = \frac{N}{\beta}$ with sharp exponent $q$, where $\beta_0$ is a computable threshold (which for large $N$ is close to $1$). Let us mention that in the realm of Lipschitz and H\"older continuous maps, estimates for Lipschitz maps imply corresponding estimates for H\"older maps by approximation, cf. \Cref{la:LiptoCalpha}. This is quite different in the category of Sobolev maps.

In this work, we extend the arguments of \cite{SVS20} from the Hopf degree to general rational homotopy groups ${\d}: \pi_{N}(\n) \to \R$. The following is our main result.

\begin{theorem}\label{th:main}
Let ${\d}: \pi_{N}(\n)\to \R$ represent a rational homotopy group. Then there exist two numbers $L = L({\d})
\in \{0,1,\ldots,N-2\}$ and 
$\beta_0 > 0$
and a constant $C=C({\d})$ such that the following holds for any $\beta > \beta_0$:

Let $f \in \lip(\S^N,\n)$ then 
\begin{equation}\label{eq:main:estW}
 |{\d}([f])| \leq C\, [f]_{W^{\beta,\frac{N}{\beta}}(\S^N)}^{\frac{N+L}{\beta}}.
\end{equation}
\end{theorem}
 $L$ is the number obtained from the corresponding tree-graph of $\d$, cf. \Cref{pr:hardtriviere}; but let us mention that $L=0$ in the case of usual degree $\S^N \to \S^N$ and $L=1$ in the case of the Hopf degree $\S^{4N-1} \to \S^{2N}$. 

As previously discussed, \Cref{th:main} extends the estimates for the Hopf degree in \cite{SVS20} -- it was shown there that the exponent $\frac{N+L}{\beta}$ is sharp in that case. See also \Cref{s:examples}. What is unclear, however, is whether the lower bound $\beta_0$ is sharp -- and there are substantial indications it is not: for the degree ${\d}_{\S^N}: \pi_{N}(\S^N) \to \Z$, the already mentioned seminal work by Bourgain-Brezis-Mironescu \cite{BBM05} shows that $\beta_0$ can be taken zero in that case. Slightly extending the question from \cite{VS20,SVS20} we could then ask

\begin{question}
Can we choose $\beta_0 = 0$ in \Cref{th:main}?
\end{question}

One of the main ingredients for the proof of \Cref{th:main} is an integral representation formula due to Hardt and Rivi\`ere \cite{HR08}, which we will describe in \Cref{pr:hardtriviere}, combined with Harmonic Analysis estimates that we describe in the proposition below. 

We need some notation:

A tree-graph $T$ is a connected, simply connected and oriented planar graph.
$T$ is oriented in the sense that, all edges have direction and at each vertex $A$, except one, there is only one segment leaving.
The vertex without leaving segment is at the top of the graph $T$ where all attached segments are arriving.
For each vertex $A$ of $T$ we can form a sub tree-graph $T_A$ such that its top vertex is $A$ and its other vertices and segments are those of $T$ that can arrive $A$.
And at each vertex $A$ of $T$ we assign a closed form $\omega_A \in \lip(\Ep^\ast \n)$.

\begin{figure}
\begin{tikzpicture}[scale = 1,thick, arr/.style={postaction={
      decorate,
      decoration={
    markings,
    mark=at position 0.5 with {\arrow[scale=2]{>}}}}}]
	\coordinate (A_0) at (0.5,1);
	\coordinate (A_1) at (0.5,-0.5);
	\coordinate (A_2) at (-2,-2);
	\coordinate (A_3) at (0,-2);
	\coordinate (A_4) at (2.5,-2);
	\coordinate (A_5) at (-2.8,-3);
	\coordinate (A_6) at (-1,-3);
	\coordinate (A_7) at (1.3,-3);
	\coordinate (A_8) at (3,-3);
	\filldraw[black] (A_0) circle (2pt) node[anchor=east]{$\omega_{A_0}$};
	\filldraw[black] (A_1) circle (2pt) node[anchor=east]{$\omega_{A_1}$};
	\filldraw[black] (A_2) circle (2pt) node[anchor=east]{$\omega_{A_2}$};
	\filldraw[black] (A_3) circle (2pt) node[anchor=north]{$\omega_{A_3}$};
	\filldraw[black] (A_4) circle (2pt) node[anchor=west]{$\omega_{A_4}$};
	\filldraw[black] (A_5) circle (2pt) node[anchor=north]{$\omega_{A_5}$};
	\filldraw[black] (A_6) circle (2pt) node[anchor=north]{$\omega_{A_6}$};
	\filldraw[black] (A_7) circle (2pt) node[anchor=north]{$\omega_{A_7}$};
	\filldraw[black] (A_8) circle (2pt) node[anchor=north]{$\omega_{A_8}$};
	\draw[arr] (A_1) -- (A_0);
	\draw[arr] (A_2) -- (A_1);
	\draw[arr] (A_3) -- (A_1);
	\draw[arr] (A_4) -- (A_1);
	\draw[arr] (A_5) -- (A_2);
	\draw[arr] (A_6) -- (A_2);
	\draw[arr] (A_7) -- (A_4);
	\draw[arr] (A_8) -- (A_4);
\end{tikzpicture}
\caption{An example of tree-graph $T$} \label{tree-graph 1}
\end{figure}
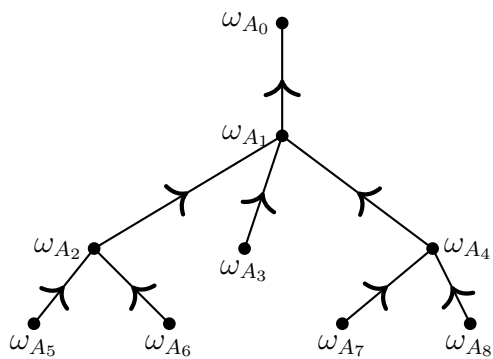

Given a degree $\d : \pi_{N}(\n) \to \R$ and $f : \S^{N} \to \n$, we assign a signed sum of tree-graphs $T = \sum (-1)^{n_i} T_i$, where $T_i$ are tree graphs.
Then
\begin{equation} \label{integral formula}
 {\d}([f]) = \int_{\S^N} f^\ast (T)
 \end{equation}
  where the tree form $f^\ast (T)$ is obtained inductively as follows.
First, if $T$ is a single vertex $A$, then $f^\ast (T) = f^\ast \omega_A$.
For a smooth $\ell$-form $\eta \in \lip(\Ep^\ell \S^{N})$, $\ell \in \{1,\ldots,N-1\}$ we set
\[
 d^{-1} \eta = d^\ast\lap^{-1} \eta
\]
where $\lap$ denotes the Laplace-DeRham operator for $\ell$-forms (which is invertible since the $\ell$-th and $(N-\ell)$-th De Rham cohomology group of $\S^N$ is trivial) and $d^\ast$ is the co-differential.

Then $f^\ast (T)$ is defined by
\[
\begin{split}
f^\ast (T) &= \sum (-1)^{n_i} f^\ast (T_i), \\
f^\ast (T_i) &= f^* \omega_{A_i} \wedge \bigwedge_{j} d^{-1} f^* (T_{ij})
\end{split}
\]
where $A_i$ is the top vertex of $T_i$, $A_{ij}$ are vertices directly arriving $A_i$ and $T_{ij}$ are sub tree-graphs of $T_i$ having $A_{ij}$ as the top vertices, and $f^\ast (T_{ij})$ are defined inductively.

For example, in the Figure \ref{tree-graph 1}, denoting $T_i$ as the sub tree-graph with top vertex $A_i$,
\[
\begin{split}
f^\ast (T) &= f^\ast (\omega_{A_0}) \wedge d^{-1} f^\ast (T_1)\\
f^\ast (T_1) &= f^\ast (\omega_{A_1}) \wedge d^{-1} f^\ast (T_2) \wedge d^{-1} f^\ast (\omega_{A_3}) \wedge d^{-1} f^\ast (T_4)\\
f^\ast (T_2) &= f^\ast (\omega_{A_2}) \wedge d^{-1} f^\ast (\omega_{A_5}) \wedge d^{-1} f^\ast (\omega_{A_6})\\
f^\ast (T_4) &= f^\ast (\omega_{A_4}) \wedge d^{-1} f^\ast (\omega_{A_7}) \wedge d^{-1} f^\ast (\omega_{A_8}).
\end{split}
\]
Note that the order of wedge product is left-to-right according to the tree-graph.

\begin{proposition}\label{pr:mainharmest}
Assume that $T$ is a tree-graph as above, e.g. in \Cref{tree-graph 1}, with $L$ vertical arrows. 
Also assume on each vertex of $T$, the assigned form is at most $M_{max}$ form.
Then there exists a $\beta_0 > 0$  such that for any $\beta > \beta_0$ the following estimate holds.

For a constant $C = C(\beta,T)$, we have for any $f \in \lip(\S^N,\R^M)$,
\[
  \abs{\int_{\S^N} f^\ast(T)}  \leq C\, \brac{ [f]_{W^{\beta,\frac{N}{\beta}}(\S^N)}^{\frac{N+L}{\beta}}+1}.
\]

\end{proposition}
Let us remark on one possible application that will be the subject of future research. As in \cite{HST14}, see also \cite{S20Aeq,VS20,HMS29}, one can use the integral formulas for degree, Hopf degree, or more generally rational homotopy groups to define a ``homotopy invariant'' for maps $f: \S^N \to \R^M$ with a rank restriction $\rank Df \leq K$. Since our analysis is based mostly on harmonic analysis, it applies also to this case -- as was discussed in \cite[Proposition 4.3.]{VS20}. In \cite{HST14} this was used to obtain nontrivial Lipschitz topology results for the Heisenberg group, in \cite{HMS29} this will be used to do the same for H\"older-topology results. One of the main motivations of this work is to provide via the framework of rational homotopy group a large number of topological invariants which possibly could be applied in Heisenberg groups or more generally Carnot groups. 

The remainder of this paper is structured as follows. In \Cref{s:proofmainharmest} we prove the crucial estimate \Cref{pr:mainharmest}. Using the Hardt--Rivi\`ere representation formula for rational homotopy groups, \Cref{pr:hardtriviere}, we discuss in \Cref{s:proofmain} how to conclude the proof of \Cref{th:main} from \Cref{pr:mainharmest}. Lastly, in \Cref{s:examples} we show in the concrete examples developed in \cite{HR08} how our estimates extend theirs.

\subsection*{Acknowledgment} A.S. is funded by Simons foundation, grant no 579261, and NSF Career DMS-2044898.

\section{Harmonic Analysis Estimates: Proof of Proposition~\ref{pr:mainharmest}}\label{s:proofmainharmest}
In order to prove \Cref{pr:mainharmest} we first extend the result in \cite[Proposition 3.2.]{SVS20}.

\begin{lemma}\label{la:lapmsest}

Let $\alpha_0 \in (0,\frac{M_0}{M_0+1}]$, $1 \leq M_0 < N$ and $f \in \lip(\R^N,\R^M)$ with compact support. For any smooth $M_0$-form $\omega$, 
for any $\beta > 1-\frac{\alpha_0}{M_0}$ we have for any $\alpha \in [\alpha_0,M_0)$,
we have
\[
 \| \lapms{\alpha}f^\ast(\omega)\|_{L^{\frac{N}{M_0-\alpha}}(\R^N)} \leq C\, \brac{[f]_{W^{\beta,\frac{N}{\beta}}}^{\frac{M_0}{\beta}} +1}
\]
where the constant $C > 0$ depends on $\|f\|_{L^\infty}$, $\omega$, $\alpha$, and $\beta$, and the support of $f$.

Here for $\alpha \in (0,N)$ and \emph{functions} $G: \R^N \to \R$
\[
 \lapms{\alpha} G(x) := \int_{\R^N} |x-y|^{\alpha - N} G(y)\, dy
\]
denotes the Riesz potential. It acts on forms component-wise.
\end{lemma}

\begin{proof}
By Sobolev embedding, for $\alpha \geq \alpha_0$,
\[
\| \lapms{\alpha}f^\ast(\omega)\|_{L^{\frac{N}{M_0-\alpha}}(\R^N)}  \aleq \| \lapms{\alpha_0}f^\ast(\omega)\|_{L^{\frac{N}{M_0-\alpha_0}}(\R^N)}, 
\]
so we may assume $\alpha = \alpha_0$.

First we may assume that $\omega = \tilde{h} \theta_1 \wedge \ldots \wedge \theta_{M_0}$,
where $\theta_i = dp^{j_i}$ are closed $1$-forms and $h := \tilde{h} \circ f \in \lip(\R^N)$. 

Take (for now) any $\gamma \in (0, \alpha_0]$, and denote $p_\gamma := \frac{N}{M_0-\gamma}$ and $p_0 := \frac{N}{M_0-\alpha_0}$. Observe $p_\gamma \in (1,p_0]$ and
\[
\frac{Np_\gamma}{N+\gamma p_\gamma} = \frac{N}{M_0} \in (1,\infty).
\]
By duality, there exists a test-form $\varphi \in C_c^\infty(\Ep^{N-M_0} \R^{N})$ with $\|\varphi\|_{L^{p_\gamma'}(\R^N)} \equiv \|\varphi\|_{L^{\frac{N}{N-M_0+\gamma}}(\R^N)} \leq 1$, such that, using Sobolev embedding in the first inequality,
\[ 
\begin{split}
\| \lapms{\alpha_0}f^\ast(\omega)\|_{L^{p_0}(\R^N)} \aleq \| \lapms{\gamma}f^\ast(\omega)\|_{L^{p_\gamma}(\R^N)} \aleq& \int_{\R^N} \tilde{h} \circ f\, f^\ast(\theta_1) \wedge \ldots f^\ast(\theta_{M_0})\wedge \lapms{\gamma} \varphi\\
=& \int_{\R^N} h\, f^\ast(\theta_1) \wedge \ldots f^\ast(\theta_{M_0})\wedge \psi
\end{split}
\]
where we set $\psi := \lapms{\gamma} \varphi$. Observe that by Sobolev embedding,
\[
 \|\psi\|_{L^{\frac{N}{N-M_0}}(\R^N)} = \|\lapms{\gamma} \varphi\|_{L^{\frac{N p_\gamma'}{N-\gamma p_\gamma'}}(\R^N)} \aleq \|\varphi\|_{L^{p_\gamma'}(\R^N)} \leq 1.
\]

Denote $H$, $F$, $\Psi$ the harmonic extensions to $\R^{N+1}_+$ of $h$, $f$, $\psi$. We find from Stokes' theorem (using that $d \theta_i = 0$)
\begin{equation}\label{eq:west:234234}
\begin{split}
 &\| \lapms{\gamma}f^\ast(\omega)\|_{L^{p_\gamma}(\R^N)} \\
 \aleq& \abs{\int_{\R^{N+1}_+} d\brac{H\, F^\ast(\theta_1) \wedge \ldots F^\ast(\theta_{M_0})\wedge \Psi}}\\
 \aleq& \abs{\int_{\R^{N+1}_+} {dH\wedge F^\ast(\theta_1) \wedge \ldots F^\ast(\theta_{M_0})\wedge \Psi}}+\abs{\int_{\R^{N+1}_+} {H\, F^\ast(\theta_1) \wedge \ldots F^\ast(\theta_{M_0})\wedge d\Psi}}\\
 \aleq& \int_{\R^{N+1}_+} |DH|\, |DF|^{M_0} |\Psi|+ \|H\|_{L^\infty}\, \int_{\R^{N+1}_+} |DF|^{M_0} |D\Psi|.
 \end{split}
\end{equation}

We estimate the \underline{first term on the right-hand side of \eqref{eq:west:234234}}.

By the representation formula for harmonic functions
\[
 |\Psi(x,t)| \aleq \mathcal{M} \psi(x),
\]
where $\mathcal{M}$ is the Hardy-Littlewood maximal function.

Moreover, see e.g. \cite[Proposition 10.2]{LS20}, since $1- \frac{\gamma}{M_0} \in (0,1)$,
\[
 \brac{\int_{\R^{N+1}_+} \brac{t^{\gamma-\frac{1}{p_\gamma}}  |DF|^{M_0}}^{p_\gamma} }^{\frac{1}{p_\gamma}} \aeq [f]_{W^{1-\frac{\gamma}{M_0},M_0 p_\gamma}}^{M_0}.
\]
And from \cite[Theorem 10.8]{LS20}, since $\gamma \in (0,1)$,
\[
 \brac{\int_{\R^N} \brac{\int_{\R_+} \brac{t^{-\gamma+\frac{1}{p_\gamma}} |DH|}^{\frac{N}{N-M_0+\gamma}} dt}^{\frac{N-M_0+\gamma}{\gamma}} dx}^{\frac{\gamma}{N}} \aeq \|h\|_{\dot{F}^{\gamma}_{\frac{N}{\gamma},\frac{N}{N-M_0+\gamma}}}
\]
where $\dot{F}^{\gamma}_{\frac{N}{\gamma},\frac{N}{N-M_0+\gamma}}$ is the Triebel-Lizorkin space; see also the presentation in \cite{Ingmanns20}.

Then we have (here we use $M_0 > \gamma$ which is true since $M_0 > \alpha \geq \gamma$)
\[
\begin{split}
  &\int_{\R^{N+1}_+} |DH|\, |DF|^{M_0} |\Psi|\\
 \aleq& [f]_{W^{1-\frac{\gamma}{M_0},M_0 p_\gamma }}^{M_0}\brac{\int_{\R^{N}} \abs{\mathcal{M}\psi(x)}^{\frac{N}{N-M_0 + \gamma}} \int_{\R_+}\brac{t^{-\gamma+\frac{1}{p_\gamma}}  |DH|}^{\frac{N}{N-M_0 + \gamma} } dt\, dx} ^{\frac{N-M_0+\gamma}{N}} \\
 \aleq& [f]_{W^{1-\frac{\gamma}{M_0},M_0 p_\gamma }}^{M_0}
 \|\psi\|_{L^{\frac{N}{N-M_0}}(\R^N)} 
 \brac{\int_{\R^{N}} \brac{\int_{\R_+}\brac{t^{-\gamma+\frac{1}{p_\gamma}}  |DH|}^{\frac{N}{N-M_0 + \gamma} } dt}^{\frac{N-M_0+\gamma}{\gamma}} dx} ^{\frac{\gamma}{N}} \\
 \aleq&[f]_{W^{1-\frac{\gamma}{M_0},M_0 p_\gamma }}^{M_0} \|h\|_{F^{\gamma}_{\frac{N}{\gamma},\frac{N}{N-M_0+\gamma}}}.
 \end{split}
\]
Let now $\beta > \gamma$.
We use the Gagliardo-Nirenberg estimate for Triebel-Lizorkin spaces \cite[Proposition 5.6]{BM18} and obtain for any $\tilde{\gamma} < \gamma < \beta$, taking $\theta \in (0,1)$ such that $\gamma = \theta \beta + (1-\theta) \tilde{\gamma}$,
\[
\begin{split}
 \|h\|_{\dot{F}^{\gamma}_{\frac{N}{\gamma},\frac{N}{N-M_0+\gamma}}} \aleq& \|h\|_{\dot{F}^{\beta}_{\frac{N}{\beta},\frac{N}{\beta}}}^\theta \|h\|_{\dot{F}^{\tilde{\gamma}}_{\frac{N}{\tilde{\gamma}},\frac{N}{\tilde{\gamma}}}}^{1-\theta} \\
 \aeq& [h]_{W^{\beta,\frac{N}{\beta}}}^\theta\, [h]_{W^{\tilde{\gamma},\frac{N}{\tilde{\gamma}}}}^{1-\theta} \\
 \aleq& [h]_{W^{\beta,\frac{N}{\beta}}}^\theta\, \|h\|_{L^\infty}^{(1-\theta)\frac{\tilde{\gamma}}{N} (\frac{N}{\tilde{\gamma}}-\frac{N}{\beta})} [h]_{W^{\beta,\frac{N}{\beta}}}^{(1-\theta)\frac{\tilde{\gamma}}{\beta}} \\
 =& [h]_{W^{\beta,\frac{N}{\beta}}}^{\frac{\gamma}{\beta}}\, \|h\|_{L^\infty}^{1-\frac{\gamma}{\beta}} .
 \end{split}
\]
If we additionally we assume $\beta > 1-\frac{\gamma}{M_0}$, then the above estimates lead to
\[
\begin{split}
 \int_{\R^{N+1}_+} |DH|\, |DF|^{M_0} |\Psi| \aleq &[f]_{W^{1-\frac{\gamma}{M_0},M_0 p_\gamma}}^{M_0}  [f]_{W^{\beta,\frac{N}{\beta}}}^{\frac{\gamma}{\beta}}\\
 \aleq&[f]_{W^{\beta,\frac{N}{\beta}}}^{M_0 \frac{1-\frac{\gamma}{M_0}}{\beta}}  [f]_{W^{\beta,\frac{N}{\beta}}}^{\frac{\gamma}{\beta}}\\
 =&[f]_{W^{\beta,\frac{N}{\beta}}}^{\frac{M_0}{\beta}}.
 \end{split}
\]
Here the constants depend on $\|f\|_{L^\infty}$.
In a similar fashion we estimate the \underline{second term on the right-hand side of \eqref{eq:west:234234}}.

By the maximum principle,
\[
 \|H\|_{L^\infty} \aleq \|h\|_{L^\infty} .
\]
Moreover, for any $\beta \in (0,1)$, again by \cite[Proposition 10.2]{LS20},
\[
 \brac{\int_{\R^{N+1}_+}  \brac{  t^{M_0-\frac{M_0 \beta}{N} - M_0 \beta} |DF|^{M_0} }^{\frac{N}{M_0 \beta}} }^{M_0 \frac{\beta}{N}} \aeq [f]_{W^{\beta,\frac{N}{\beta}}}^{M_0}.
\]
Also, whenever $\beta \in (0,1)$ satisfies $(1-\beta)M_0 < \gamma$,
\[
\begin{split}
 \brac{\int_{\R^{N+1}_+}  \brac{  t^{1-\frac{N-M_0 \beta}{N}-M_0 (1-\beta)} |D\Psi|}^{\frac{N}{N-M_0 \beta}}}^{\frac{N-M_0 \beta}{N}} \aeq& [\psi]_{W^{M_0(1-\beta),\frac{N}{N-M_0 \beta}}}\\
  =& [\lapms{\gamma}\varphi]_{W^{M_0(1-\beta),\frac{N}{N-M_0 \beta}}} \\
 \aeq&[\lapms{\gamma}\varphi]_{\dot{F}^{M_0(1-\beta)}_{\frac{N}{N-M_0 \beta},\frac{N}{N-M_0 \beta}}}\\
 =&[\lapms{-M_0(1-\beta)+\gamma}\varphi]_{\dot{F}^{0}_{\frac{N}{N-M_0 \beta},\frac{N}{N-M_0 \beta}}}\\
 \aleq&[\varphi]_{\dot{F}^{0}_{p_{\gamma'},2}}\aeq [\varphi]_{L^{p_{\gamma'}}}\\
 \aleq& 1.
 \end{split}
\]
With these observations,
\[
\begin{split}
 \int_{\R^{N+1}_+} |DF|^{M_0} |D\Psi|=&\int_{\R^{N+1}_+} t^{M_0-\frac{M_0 \beta}{N} - M_0 \beta}|DF|^{M_0}\, t^{1-\frac{N-M_0 \beta}{N}-M_0 (1-\beta)}|D\Psi|\\
 \aleq&\brac{\int_{\R^{N+1}_+}  \brac{  t^{M_0-\frac{M_0 \beta}{N} - M_0 \beta} |DF|^{M_0} }^{\frac{N}{M_0 \beta}} }^{M_0 \frac{\beta}{N}} \\
 &\quad \cdot \brac{\int_{\R^{N+1}_+}  \brac{  t^{1-\frac{N-M_0 \beta}{N}-M_0 (1-\beta)} |D\Psi|}^{\frac{N}{N-M_0 \beta}}}^{\frac{N-M_0 \beta}{N}}\\
 \aleq& [f]_{W^{\beta,\frac{N}{\beta}}}^{M_0}.
 \end{split}
\]

In conclusion, we have shown
\[
 \| \lapms{\alpha_0}f^\ast(\omega)\|_{L^{\frac{N}{M_0-\alpha_0}}(\R^N)} \aleq  [f]_{W^{\beta,\frac{N}{\beta}}}^{\frac{M_0}{\beta}} + [f]_{W^{\beta,\frac{N}{\beta}}}^{M_0} \aleq [f]_{W^{\beta,\frac{N}{\beta}}}^{\frac{M_0}{\beta}} + 1.
\]
The above holds whenever the following conditions are met by $\gamma \in (0,1)$ and $\beta \in (0,1)$:
\begin{itemize}
 \item $0 < \gamma \leq \alpha_0$
 \item $(1-\beta) M_0 < \gamma$
 \item $\beta > \gamma$.
\end{itemize}
That is 
\[
 \beta > \max\{ \frac{M_0-\gamma}{M_0}, \gamma\}.
\]
That is, we can make this argument work whenever 
\[
\begin{split}
 \beta > \inf_{\gamma \in (0,\alpha_0]} \max\{ \frac{M_0-\gamma}{M_0}, \gamma\} =&  \begin{cases}
                                                                                    \frac{M_0}{M_0+1} \quad &\text{if }\alpha_0\geq \frac{M_0}{M_0+1} \\
                                                                                    1 - \frac{\alpha_0}{M_0}\quad & \text{if }\alpha_0< \frac{M_0}{M_0+1} 
                                                                                  \end{cases}\\
%
\end{split}
\]
\end{proof}

To extend \Cref{la:lapmsest} from an estimate for $f^\ast(\omega)$ to an estimate of $f^\ast(T)$, we observe the following iterative estimate which essentially follows from the fractional Leibniz rule.

\begin{lemma}\label{la:lapmsest2}
Assume that $T$ is a tree as in the beginning of our paper as in \Cref{fig:la:lapmsest2},
\begin{figure}
\begin{center}
\begin{tikzpicture}[scale = 1,thick, arr/.style={postaction={
      decorate,
      decoration={
    markings,
    mark=at position 0.5 with {\arrow[scale=2]{>}}}}}]
	\coordinate (omega_0) at (0.5,-0.5);
	\coordinate (T_1) at (-2,-3);
    \coordinate (T_2) at (-1,-3);
    \coordinate (T_3) at (0,-3);
    \coordinate (T_4) at (1,-3);
    \coordinate (T_5) at (2,-3);
     \coordinate (T_L) at (3,-3);

	\filldraw[black] (omega_0) circle (2pt) node[anchor=east]{$\omega_{0}$};
	\filldraw[black] (T_1) circle (2pt) node[anchor=north]{$T_1$};
	\filldraw[black] (T_2) circle (2pt) node[anchor=north]{$T_2$};
	\filldraw[black] (T_3) circle (2pt) node[anchor=north]{$T_3$};
	\filldraw[black] (T_4) circle (2pt) node[anchor=north]{$T_4$};
	\filldraw[black] (T_5) node[anchor=north]{$\ldots$};
	\filldraw[black] (T_L) circle (2pt) node[anchor=north]{$T_L$};
	\draw[arr] (T_1) -- (omega_0);
	\draw[arr] (T_2) -- (omega_0);
		\draw[arr] (T_3) -- (omega_0);
			\draw[arr] (T_4) -- (omega_0);
				\draw[arr] (T_L) -- (omega_0);
		
\end{tikzpicture}
\end{center}
\caption{\label{fig:la:lapmsest2} Configuration in \Cref{la:lapmsest2}}
\end{figure}
where $T_1,\ldots,T_L$ are themselves again trees, assume that 
\[
 f^\ast(T) \equiv f^\ast(\omega_0) \wedge d^{-1}f^\ast(T_1) \wedge \ldots \wedge d^{-1}f^\ast(T_L)
\]
is an $M$-form, $\omega_0$ is an $M_0$-form, $f^\ast(T_\ell)$ is an $M_{\ell}$ form.

Assume $\alpha \in (0,1]$, then for any $\sigma \in [0,\alpha]$,
\[
\begin{split}
 \| \lapms{\alpha}f^\ast(T)\|_{L^{\frac{N}{M-\alpha}}(\Ep^{M}\R^N)} \aleq&  \|\lapms{\sigma} f^\ast(\omega_0)\|_{L^{\frac{N}{M-\sigma}}(\Ep^{M_0} \R^N)}\, \|\lapms{1-\sigma} f^\ast(T_1)\|_{L^{\frac{N}{M_1-(1-\sigma)}}(\Ep^{M_1} \R^N)}\\
 &\, \ldots\, \|\lapms{1-\sigma} f^\ast(T_L)\|_{L^{\frac{N}{M_L-(1-\sigma)}}(\Ep^{M_L} \R^N)}.
 \end{split}
\]

\end{lemma}
\begin{proof}
By duality, for some $\varphi \in C_c^\infty(\Ep^{M-N} \R^N)$, $\|\varphi\|_{L^{\frac{N}{N+\alpha-M}}(\Ep^{M-N} \R^N)} \leq 1$,
\[
\begin{split}
 \| \lapms{\alpha}f^\ast(T)\|_{L^{\frac{N}{M-\alpha}}(\Ep^{M}\R^N)} 
 \aleq& \int_{\R^N} f^\ast(T) \wedge \lapms{\alpha} \varphi\\
 =&\int_{\R^N} f^\ast(\omega_0) \wedge d^{-1} f^\ast(T_1) \wedge \ldots d^{-1} f^\ast(T_L) \wedge \lapms{\alpha} \varphi\\
 =&\int_{\R^N} \lapms{\sigma} f^\ast(\omega_0) \wedge \laps{\sigma}\brac{d^{-1} f^\ast(T_1) \wedge \ldots d^{-1} f^\ast(T_L) \wedge \lapms{\alpha} \varphi}.\\
 \end{split}
\]
Observe that 
\[
 M = M_0 + \sum_{k=1}^L(M_k-1).
\]
Then, using the fractional Leibniz rule, see e.g. \cite{LS20}, we find
\[
\begin{split}
 &\| \lapms{\alpha}f^\ast(T)\|_{L^{\frac{N}{M-\alpha}}} \\
 \aleq& \|\lapms{\sigma} f^\ast(\omega_0)\|_{L^{\frac{N}{M_0-\sigma}}}\, \|\lapms{\alpha} \varphi\|_{L^{\frac{N}{N-M}}}\,\\
 &\cdot \Big (\\
 & \|\laps{\sigma} d^{-1} f^\ast(T_1) \|_{L^{\frac{N}{M_1 -(1-\sigma)}}}\, \|d^{-1} f^\ast(T_2) \|_{L^{\frac{N}{M_2 -1}}} \ldots \|d^{-1} f^\ast(T_L)\|_{L^{\frac{N}{M_L -1}}}\\
 &+\|d^{-1} f^\ast(T_1) \|_{L^{\frac{N}{M_1 -1}}}\, \|\laps{\sigma} d^{-1} f^\ast(T_2) \|_{L^{\frac{N}{M_2 -(1-\sigma)}}} \ldots \|d^{-1} f^\ast(T_L)\|_{L^{\frac{N}{M_L -1}}}\\
 &+ \ldots + \|d^{-1} f^\ast(T_1) \|_{L^{\frac{N}{M_1 -1}}}\, \|d^{-1} f^\ast(T_2) \|_{L^{\frac{N}{M_2 -1}}} \ldots \|\laps{\sigma} d^{-1} f^\ast(T_L)\|_{L^{\frac{N}{M_L -(1-\sigma)}}}\\
 & \Big )\\
 &+ \|\lapms{\sigma} f^\ast(\omega_0)\|_{L^{\frac{N}{M_0-\sigma}}}\, \, \|\lapms{\alpha-\sigma} \varphi\|_{L^{\frac{N}{N+\sigma-M}}}\\
 &\cdot \Big (
 \|d^{-1} f^\ast(T_1) \|_{L^{\frac{N}{M_1 -1}}}\, \|d^{-1} f^\ast(T_2) \|_{L^{\frac{N}{M_2 -1}}} \ldots \|d^{-1} f^\ast(T_L)\|_{L^{\frac{N}{M_L -1}}}
  \Big ).
 \end{split}
\]
By Sobolev embedding, and using that we can write $d^{-1} = d^\ast \lap^{-1} = \lapms{1} R$ where $R$ is a zero-multiplier (a combination of Riesz transforms) and thus bounded on $L^q$ for any $q \in (1,\infty)$, 
\[
\begin{split}
 \aleq&\|\lapms{\sigma} f^\ast(\omega_0)\|_{L^{\frac{N}{M_0-\sigma}}}\, \|\varphi\|_{L^{\frac{N}{N+\alpha-M}}}\, \\
 &\cdot \Big (\\
 & \|\lapms{1-\sigma}f^\ast(T_1) \|_{L^{\frac{N}{M_1 -(1-\sigma)}}}\, \|\lapms{1-\sigma} f^\ast(T_2) \|_{L^{\frac{N}{M_2 -(1-\sigma)}}} \ldots \|\lapms{1-\sigma} f^\ast(T_L)\|_{L^{\frac{N}{M_L -(1-\sigma)}}}\\
 & \Big )\\
 \end{split}
\]
This proves the claim.
\end{proof}
Now we can obtain a version of \Cref{la:lapmsest}, only for trees $T$. Observe that the best choice of $\beta_0$ is computable by combinatorial observations, in particular for every $T$ it is easy to compute the best $\beta_0$.
But here we give an easy choice of $\beta_0$.
\begin{lemma}\label{la:lapmsest3}
Let $1\leq M_0 < N$ and $f \in \lip(\R^N,\R^M)$ with compact support.

Let $T$ be a tree, such that $f^\ast(T)$ is an $M_0$-form and on each vertex of $T$, the assigned form is at most $M_{max}$ form.
Assume $\alpha_0 \in (0,\frac{M_0}{M_0+1}]$.
Then for any $\alpha \in [\alpha_0, M_0)$ there exists $\beta_0 = \beta(\alpha,T) > 0$ such that for any $\beta \in (\beta_0,1]$:
\[
 \| \lapms{\alpha}f^\ast(T)\|_{L^{\frac{N}{M_0-\alpha}}(\R^N)} \leq C(T,\alpha_0)\, [f]_{W^{\beta,\frac{N}{\beta}}}^{\frac{M_0+L }{\beta}} +1.
\]
Here $L$ is the number of vertical lines in $T$ (or equivalently the number of $d^{-1}$ in the formula of $f^\ast (T)$).
\end{lemma}

\begin{proof}
Since by Sobolev embedding, for $\alpha \geq \alpha_0$,
\[
  \| \lapms{\alpha}f^\ast(T)\|_{L^{\frac{N}{M_0-\alpha}}(\R^N)} \aleq  \| \lapms{\alpha_0}f^\ast(T)\|_{L^{\frac{N}{M_0-\alpha_0}}(\R^N)}
\]
so we may assume that $\alpha = \alpha_0$.

If $T$ consists of exactly one $M_0$-form $\omega_0$, then the claim follows directly from \Cref{la:lapmsest}.

{\underline {Case 1}}

Consider the case when $f^\ast (T) = f^\ast (\omega) \wedge d^{-1} f^\ast (\omega_1) \wedge \ldots \wedge d^{-1}f^\ast (\omega_L)$ where $\omega$ is an $M$ form and $\omega_i$ is an $M_i$ form.
Similar in \Cref{la:lapmsest2}, for $\alpha_0 \in (0,\frac{M_0}{M_0+1}]$ and for any $\alpha_1 \in [0,\alpha_0]$,
\[
\begin{split}
&\| \lapms{\alpha_0}f^\ast(T)\|_{L^{\frac{N}{M_0-\alpha_0}}(\R^N)}\\
\aleq & \|\lapms{\alpha_1} f^\ast(\omega)\|_{L^{\frac{N}{M-\alpha_0}}}\, \|\lapms{1-\alpha_1} f^\ast(\omega_1)\|_{L^{\frac{N}{M_1-(1-\alpha_1)}}}\, \ldots\, \|\lapms{1-\alpha_1} f^\ast(\omega_L)\|_{L^{\frac{N}{M_L-(1-\alpha_1)}}}\\
\aleq & \brac{[f]_{W^{\beta,\frac{N}{\beta}}}^{\frac{M}{\beta}} + 1}\, \brac{[f]_{W^{\beta,\frac{N}{\beta}}}^{\frac{M_1}{\beta}} + 1}\, \ldots\,\brac{[f]_{W^{\beta,\frac{N}{\beta}}}^{\frac{M_L}{\beta}} + 1}
\end{split}
\]
whenever $\beta \in (0,1)$ is large enough.

{\underline {Case 2}}

Next we assume $T$ has the structure 
\begin{center}
\begin{tikzpicture}[scale = 1,thick, arr/.style={postaction={
      decorate,
      decoration={
    markings,
    mark=at position 0.5 with {\arrow[scale=2]{>}}}}}]
	\coordinate (omega_0) at (0.5,-0.5);
	\coordinate (T_1) at (-2,-3);
    \coordinate (T_2) at (-1,-3);
    \coordinate (T_3) at (0,-3);
    \coordinate (T_4) at (1,-3);
    \coordinate (T_5) at (2,-3);
     \coordinate (T_L) at (3,-3);

	\filldraw[black] (omega_0) circle (2pt) node[anchor=east]{$\omega$};
	\filldraw[black] (T_1) circle (2pt) node[anchor=north]{$T_1$};
	\filldraw[black] (T_2) circle (2pt) node[anchor=north]{$T_2$};
	\filldraw[black] (T_3) circle (2pt) node[anchor=north]{$T_3$};
	\filldraw[black] (T_4) circle (2pt) node[anchor=north]{$T_4$};
	\filldraw[black] (T_5) node[anchor=north]{$\ldots$};
	\filldraw[black] (T_L) circle (2pt) node[anchor=north]{$T_L$};
	\draw[arr] (T_1) -- (omega_0);
	\draw[arr] (T_2) -- (omega_0);
		\draw[arr] (T_3) -- (omega_0);
			\draw[arr] (T_4) -- (omega_0);
				\draw[arr] (T_L) -- (omega_0);
		
\end{tikzpicture}
\end{center}

Here $\omega$ is an $M$-form ($M < M_0$) and $T_1,\ldots,T_L$ create $M_1,\ldots,M_L$-forms via the pullback $f^\ast$, respectively.

We get from \Cref{la:lapmsest} and \Cref{la:lapmsest2} 
\[
\begin{split}
  &\| \lapms{\alpha_0}f^\ast(T)\|_{L^{\frac{N}{M_0-\alpha_0}}(\R^N)}\\
   \aleq & \|\lapms{\alpha_1} f^\ast(\omega)\|_{L^{\frac{N}{M-\alpha_1}}}\, \|\lapms{1-\alpha_1} f^\ast(T_1)\|_{L^{\frac{N}{M_1-(1-\alpha_1)}}(\Ep^{M_1} \R^N)}\, \ldots\, \|\lapms{1-\alpha_1} f^\ast(T_L)\|_{L^{\frac{N}{M_L-(1-\alpha_1)}}(\Ep^{M_L} \R^N)}\\
 \aleq & \brac{[f]_{W^{\beta,\frac{N}{\beta}}}^{\frac{M}{\beta}} + 1}\, \|\lapms{1-\alpha_1} f^\ast(T_1)\|_{L^{\frac{N}{M_1-(1-\alpha_1)}}(\Ep^{M_1} \R^N)}\, \ldots\, \|\lapms{1-\alpha_1} f^\ast(T_L)\|_{L^{\frac{N}{M_L-(1-\alpha_1)}}(\Ep^{M_L} \R^N)},
   \end{split}
\]
whenever $\alpha_1 \in [0,\alpha_0]$ and $\beta > 1 - \frac{\alpha_1}{M}$.

Suppose for each $i$, 
\[
f^\ast (T_i) = f^\ast (\omega_i) \wedge d^{-1} f^\ast (\omega_{i,1}) \wedge \ldots \wedge d^{-1}f^\ast (\omega_{i,K_i})
\]
where $f^\ast(\omega_i)$ is an $M_{i0}$ form and $f^\ast (\omega_{i,j})$ is an $M_{i,j}$ form.
Then as above,
\[
\begin{split}
&\|\lapms{1-\alpha_1} f^\ast(T_i)\|_{L^{\frac{N}{M_i-(1-\alpha_1)}}(\Ep^{M_i} \R^N)}\\
   \aleq & \|\lapms{\alpha_i} f^\ast(\omega)\|_{L^{\frac{N}{M_{i0}-\alpha_i}}}\, \|\lapms{1-\alpha_i} f^\ast(\omega_{i1})\|_{L^{\frac{N}{M_{i1}-(1-\alpha_i)}}  }\, \ldots\, \|\lapms{1-\alpha_i} f^\ast(\omega_{iK_i})\|_{L^{\frac{N}{M_{iK_i}-(1-\alpha_i)}}  }\\
 \aleq& [f]_{W^{\beta,\frac{N}{\beta}}}^{\frac{M_i+K_i}{\beta}} + 1
\end{split}
\]
for $\alpha_i \leq 1-\alpha_1$ and $\beta>1-\frac{\alpha_i}{M_{i0}}, 1-\frac{1-\alpha_i}{M_{ij}}$ for all $j$.

In conclusion, we get
\[
\begin{split}
  &\| \lapms{\alpha_0}f^\ast(T)\|_{L^{\frac{N}{M_0-\alpha_0}}(\R^N)}\\
 \aleq & \brac{[f]_{W^{\beta,\frac{N}{\beta}}}^{\frac{M}{\beta}} + 1}\, \brac{[f]_{W^{\beta,\frac{N}{\beta}}}^{\frac{M_1+K_1}{\beta}} + 1}\, \ldots\,\brac{[f]_{W^{\beta,\frac{N}{\beta}}}^{\frac{M_L+K_L}{\beta}} + 1}\\
 \aleq & [f]_{W^{\beta,\frac{N}{\beta}}}^{\frac{M_0+L+K_1+\ldots + K_L}{\beta}} + 1
   \end{split}
\]
since $M_0 = M + (M_1-1) + (M_2-1) + \ldots + (M_L-1)$.
Note that $L+K_1+\ldots + K_L$ is the number of vertical arrows in $T$.

{\underline {General Case}}

Arguing by induction over the depth of the tree, denoting by $K_i$ the number of vertical arrows in $T_i$ we find
\[
\begin{split}
  &\| \lapms{\alpha_0}f^\ast(T)\|_{L^{\frac{N}{M_0-\alpha_0}}(\R^N)}\\
    \aleq & \|\lapms{\alpha_1} f^\ast(\omega)\|_{L^{\frac{N}{M-\alpha_1}}}\, \|\lapms{1-\alpha_1} f^\ast(\omega_1)\|_{L^{\frac{N}{M_1-(1-\alpha_1)}}(\Ep^{M_1} \R^N)}\, \ldots\, \|\lapms{1-\alpha_1} f^\ast(\omega_K)\|_{L^{\frac{N}{M_K-(1-\alpha_1)}}(\Ep^{M_K} \R^N)}\\
 %
%
\aleq & \brac{[f]_{W^{\beta,\frac{N}{\beta}}}^{\frac{M}{\beta}} + 1}\, \brac{[f]_{W^{\beta,\frac{N}{\beta}}}^{\frac{M_1+K_1}{\beta}} + 1}\, \ldots\,\brac{[f]_{W^{\beta,\frac{N}{\beta}}}^{\frac{M_L+K_L}{\beta}} + 1}\\
\aleq & [f]_{W^{\beta,\frac{N}{\beta}}}^{\frac{M_0+L+K_1+\ldots + K_L}{\beta}} + 1.
 \end{split}
\]

\end{proof}

Combining \Cref{la:lapmsest} and \Cref{la:lapmsest2} and \Cref{la:lapmsest3}, we obtain

%
%
%
\begin{proof}[Proof of Proposition~\ref{pr:mainharmest}]
The case where $T$ is a tree of depth $0$ can be found in \cite{SVS20}. So from now on we assume that $M_0 < N$ and $T$ has the structure
\begin{center}
\begin{tikzpicture}[scale = 1,thick, arr/.style={postaction={
      decorate,
      decoration={
    markings,
    mark=at position 0.5 with {\arrow[scale=2]{>}}}}}]
	\coordinate (omega_0) at (0.5,-0.5);
	\coordinate (T_1) at (-2,-3);
    \coordinate (T_2) at (-1,-3);
    \coordinate (T_3) at (0,-3);
    \coordinate (T_4) at (1,-3);
    \coordinate (T_5) at (2,-3);
     \coordinate (T_L) at (3,-3);

	\filldraw[black] (omega_0) circle (2pt) node[anchor=east]{$\omega$};
	\filldraw[black] (T_1) circle (2pt) node[anchor=north]{$T_1$};
	\filldraw[black] (T_2) circle (2pt) node[anchor=north]{$T_2$};
	\filldraw[black] (T_3) circle (2pt) node[anchor=north]{$T_3$};
	\filldraw[black] (T_4) circle (2pt) node[anchor=north]{$T_4$};
	\filldraw[black] (T_5) node[anchor=north]{$\ldots$};
	\filldraw[black] (T_L) circle (2pt) node[anchor=north]{$T_L$};
	\draw[arr] (T_1) -- (omega_0);
	\draw[arr] (T_2) -- (omega_0);
		\draw[arr] (T_3) -- (omega_0);
			\draw[arr] (T_4) -- (omega_0);
				\draw[arr] (T_L) -- (omega_0);
		
\end{tikzpicture}
\end{center}

Let 
\[
 p_0 = \frac{N}{M_0-\alpha_0}.
\]
By localization we need to find an estimate in $\R^N$
\[
\begin{split}
 &\int_{\R^N} f^\ast(\omega_0) \wedge d^{-1} f^\ast(T_1) \wedge \ldots \wedge d^{-1} f^\ast(T_L)\\
 =&\int_{\R^N} \lapms{\alpha_0}f^\ast(\omega_0) \wedge \Ds{\alpha_0}\brac{d^{-1} f^\ast(T_1) \wedge \ldots \wedge d^{-1} f^\ast(T_L)}\\
 \aleq&\| \lapms{\alpha_0}f^\ast(\omega_0)\|_{L^{p_0}(\R^N)}\, \|\Ds{\alpha_0}\brac{d^{-1} f^\ast(T_1) \wedge \ldots \wedge d^{-1} f^\ast(T_L)}\|_{L^{p_0'}(\R^N)}
 \end{split}
\]
In the second equality we used an integration by parts.
%
%
As in the proof of \Cref{la:lapmsest2}, by fractional Leibniz Rule, see e.g. \cite{LS20}, (observe $\frac{M_1-(1-\alpha_0)}{N} + \frac{M_2-1}{N} + \ldots + \frac{M_L-1}{N}=\frac{N-M_0+\alpha_0}{N}=\frac{1}{p_0'}$)
\[
\begin{split}
 &\|\Ds{\alpha_0}\brac{d^{-1} f^\ast(T_1) \wedge \ldots \wedge d^{-1} f^\ast(T_L)}\|_{L^{p_0'}(\R^N)}\\
 \aleq& \|\lapms{1-\alpha_0} f^\ast(T_1)\|_{L^{\frac{N}{M_1-(1-\alpha_0)}}(\R^N)} \cdot \ldots \cdot \|\lapms{1} f^\ast(T_L)\|_{L^{\frac{N}{M_L-1}}(\R^N)}\\
 &+\|\lapms{1} f^\ast(T_1)\|_{L^{\frac{N}{M_1-1}}(\R^N)} \cdot \|\lapms{1-\alpha_0} f^\ast(T_2)\|_{L^{\frac{N}{M_2-(1-\alpha_0)}}(\R^N)} \cdot \ldots \cdot \|\lapms{1} f^\ast(T_L)\|_{L^{\frac{N}{M_L-1}}(\R^N)}\\
 &+ \ldots + \|\lapms{1} f^\ast(T_1)\|_{L^{\frac{N}{M_1-1}}(\R^N)}  \cdot \ldots \cdot \|\lapms{1-\alpha_0} f^\ast(T_L)\|_{L^{\frac{N}{M_L-(1-\alpha_0)}}(\R^N)}.
 \end{split}
\]
Above we used again repeatedly the definition $d^{-1} f^\ast(\omega) =  d^\ast \lap^{-1} f^\ast(\omega)$. The claim now readily follows from \Cref{la:lapmsest} and \Cref{la:lapmsest3}.
\end{proof}
%
%
%

Let us mention that it seems to us that extending \Cref{la:lapmsest} to Triebel-Lizorkin spaces one can obtain the limit case $\beta = \beta_0$ from the above argument. We do not pursue this aspect further, though. Also, let us remark again that for any specific tree $T$ simple combinatorics allows to explicitly compute the best $\beta_0$ that our method allows. This was done for the Hopf degree in \cite{SVS20}.

\section{Proof of Theorem~\ref{th:main}}\label{s:proofmain}

While quantitative topology in general is very difficult and only special cases are known, let us begin with recalling a very easy statement: small BMO-norm means a function is contractible to a point (and thus topologically trivial). Here we recall the definition
\begin{equation}\label{eq:smallbmo}
\begin{split}
 [f]_{{\rm BMO}(\S^N)} :=& \sup_{r > 0, x\in \S^N} \mvint_{B(x,r)\cap \S^N} |f-(f)_{B(x,r)\cap \S^N)}|\\
 \aeq& \sup_{r > 0, x\in \S^N} \mvint_{B(x,r)\cap \S^N} \mvint_{B(x,r)\cap \S^N} |f(\theta)-f(\sigma)| d\sigma d\theta.
\end{split}
 \end{equation}
The following is the the precise statement
\begin{lemma}\label{la:smallnohomo}
Let $\n$ be a smooth compact manifold without boundary embedded into $\R^M$ and $\d: \pi_{N}(\n) \to \R$ be a rational homotopy group. Then there exists $\eps = \eps(\d)$ such that whenever $f \in \lip(\S^N,\n)$ satisfies one of the following
\begin{itemize}
 \item If $[f]_{{\rm BMO}} < \eps$, or
 \item if $[f]_{W^{\beta,\frac{N}{\beta}}} < \eps$ for some $\beta > 0$, or
 \item if $[f]_{C^{\beta}} < \eps$ for $\beta > 0$
\end{itemize}
then ${\d}([f]) = 0$.
\end{lemma}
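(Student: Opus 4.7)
The three hypotheses are successively stronger, and the strategy is to reduce all of them to a single smallness condition on the BMO seminorm, and then invoke the classical VMO--manifold homotopy theory of Brezis--Nirenberg \cite{BN95,BN96} to conclude that $[f]$ is the trivial class in $\pi_N(\n)$; since ${\d}$ is a homomorphism, it then vanishes at $[f]$.

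First I would reduce the H\"older and fractional Sobolev hypotheses to a small-BMO hypothesis. On the compact space $\S^N$, a direct computation from \eqref{eq:smallbmo} yields
\[
[f]_{{\rm BMO}(\S^N)} \aleq [f]_{C^\beta(\S^N)},
\]
while the critical fractional Sobolev embedding $W^{\beta,N/\beta}(\S^N) \hookrightarrow {\rm BMO}(\S^N)$ provides
\[
[f]_{{\rm BMO}(\S^N)} \aleq [f]_{W^{\beta,N/\beta}(\S^N)}.
\]
Thus it suffices to prove the claim under the sole assumption $[f]_{{\rm BMO}(\S^N)} < \eps'$ for some small $\eps'$ to be fixed depending only on $\n$.

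Next I would construct an explicit homotopy via mollification. Let $\rho > 0$ be the reach of the embedding $\n \subset \R^M$ and $\pi_\n : U \to \n$ the smooth nearest-point retraction from the $\rho$-tubular neighborhood $U \subset \R^M$. For $r \in (0,D]$, with $D := \diam \S^N$, set
\[
f_r(x) := \mvint_{B(x,r) \cap \S^N} f.
\]
A standard BMO computation shows
\[
\sup_{x \in \S^N,\, r \in (0,D]} \dist \brac{f_r(x),\ \n} \aleq [f]_{{\rm BMO}(\S^N)},
\]
using that $\mvint_{B}|f_r(x)-f(y)|\, dy \aleq [f]_{{\rm BMO}}$ forces the existence of some $y$ in the ball with $|f_r(x)-f(y)|$ small, and $f(y)\in\n$. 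Choosing $\eps'$ so small that the right-hand side is $<\rho$, the map
\[
H(x,r) := \pi_\n \brac{f_r(x)}
\]
is a continuous map $\S^N \times (0,D] \to \n$. Smoothness of $f$ gives $H(\cdot,r) \to f$ uniformly as $r \to 0^+$, while $f_D$ equals the global average of $f$, so $H(\cdot,D)$ is a constant map into $\n$. Hence $H$ realizes a continuous homotopy from $f$ to a constant, forcing $[f] = 0 \in \pi_N(\n)$ and therefore ${\d}([f]) = 0$.

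The main obstacle is the uniform estimate $\dist(f_r(x),\n) \aleq [f]_{{\rm BMO}(\S^N)}$ with constants independent of scale; this is classical but requires combining a Poincar\'e/John--Nirenberg type argument on geodesic balls of $\S^N$ with the observation that at every point and scale the value $f_r(x)$ is in average close to a point of $\n$. Everything else is essentially bookkeeping once this estimate and the smoothness of $\pi_\n$ are in hand.
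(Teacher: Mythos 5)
Your proposal is correct and reaches the same conclusion as the paper, but by a genuinely different (and in fact somewhat cleaner) construction. The paper extends $f$ harmonically into the ball $\B^{N+1}$ via the Poisson kernel, then shows $\dist(F(x),\n)\aleq [f]_{\rm BMO}$ by a dyadic decomposition of the Poisson kernel, and uses the radial contraction $H(\theta,t)=\pi_\n(F(t\theta))$ as the homotopy. You instead mollify intrinsically on $\S^N$: $f_r(x):=\mvint_{B(x,r)\cap\S^N}f$, and the corresponding distance estimate is immediate, since
\[
\dist\bigl(f_r(x),\n\bigr)\le \inf_{y\in B(x,r)\cap\S^N}|f_r(x)-f(y)|\le \mvint_{B(x,r)\cap\S^N}\mvint_{B(x,r)\cap\S^N}|f(z)-f(y)|\,dz\,dy \aleq [f]_{\rm BMO},
\]
which is exactly the second characterization of ${\rm BMO}$ in \eqref{eq:smallbmo} with no series summation needed. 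Both proofs then project by $\pi_\n$ and identify the homotopy from $f$ to the constant (the global average in your case, $F(0)$ in the paper's). Your approach buys a shorter uniform estimate at the price of some minor bookkeeping: one should note that $H$ extends continuously to $r=0$ by $H(\cdot,0)=f$ (which you already justify via uniform convergence since $f$ is smooth), and that at $r=\diam\S^N$ the ball $B(x,r)\cap\S^N$ misses only a null set, so $f_D$ is indeed the constant global mean. The paper's harmonic-extension route avoids these small checks but requires the Poisson-kernel decomposition. Both are standard in the Brezis--Nirenberg circle of ideas; your version is, if anything, a bit more elementary.
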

\begin{proof}
The proof is standard, but we repeat it for the convenience of the reader.  

It is easy to check that the second and third condition imply the small BMO-norm, so we focus on this one. Let
\[
 F \in \lip(\B^{N+1},\R^M)
\]
be the harmonic extension of $f$, namely via the Poisson formula
\[
 F(x) =c(N) \int_{\S^{N}} f(\theta) \frac{1-|x|^2}{|x-\theta|^{N+1}}\, d\theta, \quad |x| < 1.
\]
The BMO-condition shows that $F$ stays close to the manifold $\mathcal{N}$. To see this observe that for any $\sigma \in \S^N$,
\[
 \dist(F(x),\n) \leq |F(x)-f(\sigma)|.
\]
Multiplying this with the Poisson kernel and integrating in $\sigma$ we see that 
\[
\begin{split}
 \dist(F(x),\n) \aleq& \int_{\S^{N}} |F(x) -f(\sigma)| \frac{1-|x|^2}{|x-\sigma|^{N+1}}\, d\sigma\\
 \aleq& \int_{\S^N} \int_{\S^{N}} |f(\theta) -f(\sigma)| \frac{1-|x|^2}{|x-\theta|^{N+1}}\, \frac{1-|x|^2}{|x-\sigma|^{N+1}}\, d\theta\, d\sigma.
 \end{split}
\]
Set $r = 1-|x|$ and $X := \frac{x}{|x|}$ and set $A(X,r,k) := B(X,2^{k+1} r) \setminus B(X,2^k r)$ if $k > 0$ and $A(X,r,0) = B(X, r)$. Then we have 
\[
\begin{split}
 \dist(F(x),\n) \aleq& \sum_{k \in \N \cup \{0\}} \sum_{\ell \in \N \cup \{0\}} \int_{\S^N \cap A(X,r,k)} \int_{\S^{N} \cap A(X,r,\ell)} |f(\theta) -f(\sigma)| \frac{r}{(2^k r)^{N+1}}\, \frac{r}{(2^\ell r)^{N+1}}\, \, d\theta\, d\sigma\\
 \aeq& \sum_{k \in \N \cup \{0\}} \sum_{\ell \in \N \cup \{0\}} 2^{-\ell} 2^{-k} \mvint_{\S^N \cap A(X,r,k)} \mvint_{\S^{N} \cap A(X,r,\ell)} |f(\theta) -f(\sigma)| \, d\theta\, d\sigma\\
 \aleq&\sum_{k \in \N \cup \{0\}} \sum_{\ell \in \N \cup \{0\}} 2^{-\ell} 2^{-k} [f]_{{\rm BMO}} \aeq [f]_{{\rm BMO}}.
 \end{split}
\]
That is, we have shown 
\[
 \dist(F(x),\n) \aleq [f]_{{\rm BMO}} < \eps.
\]
Since $\n$ is compact without boundary, there exists a $\delta > 0$ and the nearest point projection from a tubular neighborhood of $\n$ into $\n$, $\pi_{\n}: B_\delta(\n) \to \n$, see e.g. \cite{S96}. So if $\eps$ is small enough we have that $F(x): \B^{N+1} \to B_\delta(\n)$, and thus $G(x) := \pi_{\n} \circ F$ is well defined and as smooth as $F$. Now 
\[
 H(\theta,t) := G(t\theta) : \S^{N-1} \times  [0,1] \to \n
\]
is a smooth homotopy. That is $f = \pi_{\n} \circ f = H(\cdot,1)$ is homotopic to the constant map $G(0) = H(\cdot,0)$. So $f: \S^N \to \n$ is topologically trivial and thus ${\d}([f])=0$.
\end{proof}

From \Cref{la:smallnohomo} we conclude that when proving \Cref{th:main} we only need to consider maps $\varphi$ with relatively large $W^{\beta,\frac{N}{\beta}}(\S^N)$-norm, since for small $W^{\beta,\frac{N}{\beta}}(\S^N)$ norm the rational homotopy group vanishes. We want to apply \Cref{pr:mainharmest}. The connection to the rational homotopy group comes from the following representation formula for simply connected smooth manifolds $\n$ which are compact and without boundary.
It is due to Sullivan \cite{S77}, Novikov \cite{Nov1,Nov2,Nov3}, but in this form it was developed by Hardt and Rivi\`ere \cite{HR08}. We refer to \cite{HR08} for the proof. See also \cite{R07}.
\begin{proposition}\label{pr:hardtriviere}
Let ${\d}: \pi_{N}(\n) \to \R$ be a rational homotopy group, then
\[
 {\d}([f]) =  \sum_{k=1}^{K} \int_{\S^N} f^\ast(T_k)
\]
where $T_k$ is a tree-graph with $L_k$ vertical arrows.
\end{proposition}

\begin{proof}[Proof of Theorem~\ref{th:main}]
In view of \Cref{la:smallnohomo} the claimed estimate \eqref{eq:main:estW} is trivially satisfied if $[f]_{W^{\beta,\frac{N}{\beta}}} < \eps$ if $\eps=\eps(\d)$ is chosen small enough. 
On the other hand, combining \Cref{pr:hardtriviere} and \Cref{pr:mainharmest} with $L = \max L_k$ we obtain

\[
  |{\d}([f])| \aleq [f]_{W^{\beta,\frac{N}{\beta}}(\S^N)}^{\frac{N+L}{\beta}} + 1 \aleq_{\eps} [f]_{W^{\beta,\frac{N}{\beta}}(\S^N)}^{\frac{N+L}{\beta}}
\]
whenever $[f]_{W^{\beta,\frac{N}{\beta}}(\S^N)}^{\frac{N+L}{\beta}} \geq \eps$. This proves \eqref{eq:main:estW}. 
\end{proof}

\section{Examples and comparison with previous estimates}\label{s:examples}
%
Hardt--Rivi\`{e}re provided several examples of the generalized degree maps \cite{HR08}. We collect here the corresponding degree estimates.

\begin{example}
Let $\n = \bbbc P^2$.
There are two generalized degree maps, ${\d}_{\alpha} : \lip(\S^2,\bbbc P^2)$ and ${\d}_{\beta} : \lip(\S^5,\bbbc P^2)$ given by
\[
{\d}_{\alpha}(f) = \int_{\S^2} f^\ast \omega \quad \text{ and } \quad {\d}_{\beta}(f) = \int_{\S^5} f^\ast \omega^2 \wedge d^{-1} f^\ast \omega
\]
where $\omega$ is the K{\"a}hler form on $\bbbc P^2$.
Then the estimate is, for ${\beta_1} \ge \frac{3}{4}$ and ${\beta_2} \ge \frac{7}{8}$,
\[
\abs{ {\d}_{\alpha}(f)} \aleq  [f]_{W^{{\beta_1},\frac{2}{{\beta_1}}}(\S^2)}^{\frac{2}{{\beta_1}
}} \quad \text{ and } \quad \abs{ {\d}_{\beta}(f)} \aleq [f]_{W^{{\beta_2},\frac{5}{{\beta_2}}}(\S^5)}^{\frac{6}{{\beta_2}}}.
\]
\end{example}

\begin{example}
Let $\n = \S^2 \times \S^2$.
There are four generalized degree maps, ${\d}_{\alpha_i} : \lip(\S^2,\S^2 \times \S^2)$ and ${\d}_{\beta_i} : \lip(\S^3,\S^2 \times \S^2)$ for $i=1,2$ given by
\[
{\d}_{\alpha_i}(f) = \int_{\S^2} f^\ast \omega_i \quad \text{ and } \quad {\d}_{\beta_i}(f) = \int_{\S^3} f^\ast \omega_i \wedge d^{-1} f^\ast \omega_i
\]
where $\omega_i$ is a pull-back of the generator of $H^2(\S^2)$ under the coordinate projections $\pi_i : \S^2 \times \S^2 \to \S^2$.
Then the estimate is, for ${\beta_1} \ge \frac{3}{4}$ and ${\beta_2} \ge \frac{3}{4}$,
\[
\abs{ {\d}_{\alpha_i}(f)} \aleq [f]_{W^{{\beta_1},\frac{2}{{\beta_1}}}(\S^2)}^{\frac{2}{{\beta_1}}} \quad \text{ and } \quad \abs{ {\d}_{\beta_i}(f)} \aleq [f]_{W^{{\beta_2},\frac{3}{{\beta_2}}}(\S^3)}^{\frac{4}{{\beta_2}}}.
\]
\end{example}

\begin{example}
Let $\n = (\S^2 \times \S^2) \# \bbbc P^2$.
There are many generalized degree maps, but here we only consider maps from $\S^4$.
There are five of them, namely, ${\d}_{\gamma_i}, {\d}_{\delta_{k}} : \lip(\S^4,(\S^2 \times \S^2) \# \bbbc P^2)$ for $i=1,2,3$ and $k = 1,2$ given by
\[
\begin{split}
{\d}_{\gamma_i}(f) &= \int_{\S^4} f^\ast \omega_{1_i} \wedge d^{-1} f^\ast \omega_{2_i} \wedge d^{-1} f^\ast \omega_{3_i}\\
{\d}_{\delta_k}(f) &= \int_{\S^4} f^\ast \eta \wedge d^{-1} f^\ast \omega_{0_k} + \sum_{i=1}^{3} \int_{\S^4} f^\ast \omega_{1_{k,i}} \wedge d^{-1} f^\ast \omega_{2_{k,i}} \wedge d^{-1} f^\ast \omega_{3_{k,i}}
\end{split}
\]
where $\omega_i$, $i=1,2,3$ is a generator of $H^2(\n)$ and $\eta$ is a 3-form on $\n$ obtained from $\omega_i$.
Then the estimate is, for $\beta \ge \frac{3}{4}$,
\[
\abs{ {\d}_{\gamma_i}(f)} \aleq [f]_{W^{\beta,\frac{4}{\beta}}(\S^4)}^{\frac{6}{\beta}} \quad \text{ and } \quad \abs{ {\d}_{\delta_k}(f)} \aleq [f]_{W^{\beta,\frac{4}{\beta}}(\S^4)}^{\frac{6}{\beta}}.
\]
\end{example}

\begin{example}
In \cite{SVS20} they obtained for maps $f: \S^{4n-1} \to \S^{2n}$,
\[
\abs{ {\d}_{H}(f)} \aleq [f]_{W^{\beta,\frac{4n-1}{\beta}}(\S^{4n-1})}^{\frac{4n}{\beta}}.
\]
for $\beta \ge \frac{4n-1}{4n}$. See also \cite{Riv98} where Rivi\`{e}re proved the case $\beta=1$.
Note that
\[
{\d}_{H}(f) = \int_{\S^{4n-1}} \eta \wedge d\eta =  \int_{\S^{4n-1}} f^* \omega \wedge d^{-1} f^* \omega
\]
where $d\eta = f^* \omega$, $\eta$ is a smooth $2n-1$ form on $\S^{4n-1}$, $\omega$ is the volume form of $\S^{2n}$.
So, our estimate gives the same result because
\[
\abs{ {\d}_{H}(f)} \aleq [f]_{W^{\beta,\frac{N}{\beta}}(\S^N)}^{\frac{N+L}{\beta}} = [f]_{W^{\beta,\frac{4n-1}{\beta}}(\S^{4n-1})}^{\frac{4n}{\beta}}
\]
for $\beta \geq \beta_0 = 1-\frac{1}{2(2n)} = \frac{4n-1}{4n}$.
\end{example}

\appendix
\section{From Lipschitz to H\"older}
For maps between manifolds, a Lipschitz estimate on a homotopy invariant readily implies a H\"older estimate using a mollification argument. We recall this well-known fact here.

\begin{lemma}\label{la:LiptoCalpha}
Let $\m \subset \R^M$, $\n \subset \R^N$ be two smooth compact manifolds without boundary and assume that there exist a map 
\[
 \d:  C^0(\m,\n) \to \R 
\]
such that
\begin{itemize}
 \item $\d(f) = \d(g)$ if $f, g \in C^0(\m,\n)$ are homotopic to each other, and
 \item there are $\Lambda, q > 0$ we have
\[
 |\d(f)|\leq \Lambda [f]_{\lip}^q \quad \forall f \in \lip(\m,\n).
\]
\end{itemize}
Then for any $\alpha \in (0,1)$ there exists $C = C(\Lambda,q,\m,\n)$ such that 
\[
 |\d(f)|\leq C [f]_{C^{\alpha}}^{\frac{q}{\alpha}} \quad \forall f \in C^{\alpha}(\m,\n).
\]
\end{lemma}
\begin{proof}
Since $\n \subset \R^N$ is smooth and compact, there exists some $\sigma = \sigma(\n) > 0$ and a smooth nearest point projection from a tubular neighborhood of $\n$ into $\n$, \[\pi: B_\sigma(\n) \to \n.\]

Combining a decomposition of unity on $\m$ with the usual mollification and the projection $\pi$ we find that for any $\eps > 0$ and any $f \in C^\alpha(\m,\n)$ there exists a map $f_\eps$ such that 
\[
 \|f_\eps - f\|_{L^\infty(\m)} \leq \eps^{\alpha} [f]_{C^\alpha}.
\]
and with some uniform constant $C_1(\m,\n) > 0$
\[
 [f_\eps]_{\lip} \leq C_1(\m,\n)\, \eps^{\alpha-1} [f]_{C^\alpha}.
\]
Set for $\delta > 0$
\[
 \eps := \delta [f]_{C^\alpha}^{-\frac{1}{\alpha}}.
\]
then we have 
\[
 \|f_\eps - f\|_{L^\infty(\m)} \leq \delta^{\alpha}.
\]
and
\[
 [f_\eps]_{\lip} \leq C_1(\m,\n)\, \delta^{\alpha-1} [f]_{C^\alpha}^{\frac{1}{\alpha}}.
\]
Take $\delta := \frac{1}{2} \sigma^{\frac{1}{\alpha}}$. Then
\[
 F(t,\cdot) := \pi_{\n}\brac{(1-t) f_\eps + t f}: \quad \m \to \n
\]
is well defined for all $t \in [0,1]$, and thus $f_\eps$ and $f$ are homotopic to each other.

We conclude 
\[
 |\d(f)| = |\d(f_\eps)| \leq \Lambda [f_\eps]_{\lip}^q = \Lambda C_1(\m,\n)^q\, \delta^{(\alpha-1)q}\, [f]_{C^\alpha}^{\frac{q}{\alpha}}.
\]
Setting $C := \Lambda C_1(\m,\n)^q\, \delta^{(\alpha-1)q}$ we can conclude.

\end{proof}

\bibliographystyle{abbrv}
\bibliography{bib}

\end{document}